\documentclass[11pt,reqno]{article}
\usepackage{amsfonts}
\usepackage{amsmath}
\usepackage{amssymb}
\usepackage{epsfig}
\usepackage{amsthm}
\usepackage[english]{babel}
\usepackage{hyperref}
\parindent 0pt
\parskip0.2in
\textwidth 6.3in
\oddsidemargin -.1in
\newcommand{\N}{\mathbb N}

\newcommand{\R}{\mathbb R}

\newcommand{\E}{\mathbb E}
\newcommand{\pr}{\mathbb P}

\newcommand{\half}{\frac{1}{2}}
\newcommand{\salj}{\mathcal{F}}

\newcommand{\Var}{\mathop{\rm Var}\nolimits}
\newenvironment{centre}{\begin{center}}{\end{center}}

\begin{document}
\theoremstyle{plain}
\newtheorem{thm}{Theorem}
\newtheorem{lem}[thm]{Lemma}
\newtheorem{cor}[thm]{Corollary}
\newtheorem{prop}[thm]{Proposition}

\title{Preferential attachment graphs with co-existing types of different fitnesses}
\author{Jonathan Jordan\\ University of Sheffield}

\maketitle
\begin{abstract}We extend the work of Antunovi\'{c}, Mossel and R\'{a}cz on competing types in preferential attachment models to include cases where the types have different fitnesses, which may be either multiplicative or additive.  We show that, depending on the values of the parameters of the models, there are different possible limiting behaviours depending on the zeros of a certain function.  In particular we show the existence of choices of the parameters where one type is favoured both by having higher fitness and by the type attachment mechanism, but the other type has a positive probability of dominating the network in the limit.\end{abstract}

\section{Introduction}

In \cite{AMR}, Antunovi\'{c}, Mossel and R\'{a}cz consider preferential attachment graphs with a number of competing types, with new vertices being assigned to the types in a way which depends on the types of their neighbours.  They show that, depending on the mechanism for assigning the types to the new vertices, various limiting behaviours for the types of vertices are possible, including situations where one type ends up dominating but also including situations where the types co-exist.  For many choices of type assignment mechanism, more than one limiting behaviour is possible.  In this paper, we concentrate on the case where there are two types, which is also mainly the case in \cite{AMR}.

The aim of this paper is to extend the class of attachment models studied by \cite{AMR}, and to investigate the effects of different types of attachment model on the competition between the types.  In particular we consider attachment models where the two types are treated differently, so that one type may be favoured, being seen as having higher fitness.  These models are based on the preferential attachment with multiplicative fitness model, introduced by Bianconi and Barab\'{a}si \cite{bianconi}, and the additive fitness model, introduced by Erg\"{u}n and Rodgers \cite{ergun2002}.  This can be thought of as extending the framework of \cite{AMR} to include cases where one of the types has some intrinsic advantage over the other in terms of attracting new connections.

Similarly to \cite{AMR}, we use stochastic approximation methods, both one and two dimensional, to show that there are a number of possible limits, which are stable zeros of a particular function on $[0,1]$ which depends on the choice of parameters and the nature of the fitness mechanism.  In particular, we show that, in both the multiplicative and additive fitness models, there are choices of the parameter values where one type has higher fitness and is also favoured by the type assignment mechanism, but there is positive probability of the other type dominating the network in the limit.  There are also cases with a symmetric type assignment mechanism where a less fit type is able to maintain a positive proportion of the vertices with probability $1$.  Typically there are threshold values for the ratio of the fitnesses between the types beyond which this behaviour cannot occur and the fitter type dominates almost surely.

In Section \ref{AMRreview} we review \cite{AMR} and the results of that paper.  In Section \ref{affine} we extend the results on the affine preferential attachment model, which is covered briefly in \cite{AMR}; we show that the results proved in \cite{AMR} fully extend to this model.  In Section \ref{multiplicative} we consider the model with multiplicative fitness, and in Section \ref{additive} we consider the model with additive fitness, which can be seen as a generalisation of affine preferential attachment where the additive constants are different for the two types.  In each of Sections \ref{multiplicative} and \ref{additive} we give some examples of choices of the parameters, the possible behaviours, and how phase transitions occur as the fitness values change.  Finally, in Section \ref{discuss} we discuss some further topics related to the model, including the extension to more than two colours and the link to multiple drawing P\'{o}lya urns; urns of this type have been considered by a number of recent papers including Lasmar, Mailler and Selmi \cite{lasmar} and Gao and Mahmoud \cite{gao2018} and have a natural connection to preferential attachment.

\section{The Antunovi\'{c}, Mossel and R\'{a}cz model and their results}\label{AMRreview}

The model in \cite{AMR} is a standard Barab\'{a}si-Albert model as introduced in \cite{scalefree1999}, with a new vertex connecting to $m$ existing vertices, chosen with probability proportional to their degree, with each of the $m$ vertices for a given new vertex chosen independently as in the variant of preferential attachment in \cite{scalefree7} or the ``independent model'' of \cite{bergerpa}.

More precisely, we start with a graph $G_0$, where each vertex is of one of two colours, described as red and blue; for a given vertex $v$ we set $T_v=1$ if $v$ is red and $T_v=2$ if $v$ is blue.  Throughout this paper we assume that $G_0$ contains at least one vertex of each type.  At each time step we form a new graph $G_{n+1}$ from $G_n$ by adding a single vertex and edges connecting the new vertex to $m$ vertices $W^{(n+1)}_1, \ldots, W^{(n+1)}_m$, with the $W^{(n+1)}_i$ independent of each other, conditional on $G_n$, and distributed so that for each vertex $v$ of $G_n$ $$\pr(W^{(n+1)}_i=v|\salj_n)=\frac{\deg_{G_n}(v)}{\sum_{u\in V(G_n)}\deg_{G_n}(u)}.$$  Here $\salj_n$ is the $\sigma$-algebra generated by the graphs $G_0,\ldots,G_n$ and the types of their vertices.

The model includes parameters $p_k\in[0,1]$ for each $k\in 0,1,\ldots,m$, and the new vertex $v_{n+1}$ chooses its colour by becoming red with probability $$\pr(T_{v_{n+1}}=1|\salj_n,\{W^{(n+1)}_i:1\leq i\leq m\})=p_{K_{n+1}},$$ where $K_{n+1}$ is the number of the vertices $W^{(n+1)}_1, \ldots, W^{(n+1)}_m$ which are red; otherwise the new vertex will be blue.  Hence the variation in behaviour is obtained by different choices of the $m$ and the $p_k$.  An obvious example is to let $p_k=k/m$, which in \cite{AMR} is called the \emph{linear model} and which corresponds to the new vertex picking its colour by adopting the colour of one of its neighbours chosen at random.

In \cite{AMR} it is shown that, in the linear model, the proportion of red vertices converges, and that the limiting distribution has full support and no atoms.  The proof uses the fact that, if $X_n$ and $Y_n$ are the total degrees of all red and blue vertices respectively, and $x_n=X_n/(X_n+Y_n)$, then $(x_n)_{n\in\N}$ is a martingale.

In the non-linear models the authors of \cite{AMR} define the polynomial \begin{equation}\label{poly}P(z)=\frac12\sum_{k=0}^m \binom{m}{k}z^k(1-z)^{m-k}\left(p_k-\frac{k}{m}\right),\end{equation} and they use a stochastic approximation approach to show that the proportion of red vertices converges almost surely to a stable zero or touchpoint of $P(z)$, and that any such stable zero in $[0,1]$ has positive probability of being the limit, as does any touchpoint in $(0,1)$.  (Note that in the linear case $P(z)=0$ for all $z$.)  Here a stable zero of $P(z)$ is a value $p$ such that $P(p)=0$ and there exists $\epsilon>0$ such that $P(z)>0$ for $z\in(p-\epsilon,p)\cap [0,1]$ and $P(z)<0$ for $z\in(p,p+\epsilon)\cap [0,1]$, while a touchpoint is a value $p$ such that $P(p)=0$ and there exists $\epsilon>0$ such that either $P(z)>0$ for all $z\in((p-\epsilon,p+\epsilon)\setminus \{p\})$ or $P(z)<0$ for all $z\in((p-\epsilon,p+\epsilon)\setminus \{p\})$.  Various examples are shown, including one where one of the two colours will have a proportion tending to $1$ as $n\to\infty$, and one where there are a number of possible limits, each of which involves co-existence of the two types but with different limiting proportions.

\section{Affine preferential attachment}\label{affine}

In this section we consider the same type selection process as in \cite{AMR}, but on the affine preferential attachment model, introduced by Dorogovtsev, Mendes and Samukhin \cite{dorog} and later studied rigorously in various papers including \cite{buckley, scalefree7}.  In this model, instead of existing vertices being chosen with probability proportional to their degrees we now have $$\pr(W^{(n+1)}_i=v|\salj_n)=\frac{\deg_{G_n}(v)+\alpha}{\sum_{u\in V(G_n)}(\deg_{G_n}(u)+\alpha)},$$ for some $\alpha>-m$, with the model otherwise being identical to that in Section \ref{AMRreview}.  This model is considered briefly in Section 4 of \cite{AMR}, where, using a two-dimensional stochastic approximation process, it is stated that the same results apply if $\alpha>0$ but it is suggested that there may be other possibilities if $\alpha<0$.  Below, using a slightly different method which only requires a one-dimensional stochastic approximation process, we show that the same results as in \cite{AMR} apply for all $\alpha$.

As in \cite{AMR}, let $A_n$ (resp. $B_n$) be the number of red (resp. blue) vertices in $G_n$, and let $X_n$ (resp. $Y_n$) be their total degree.  We now define $$q_n=\frac{X_n+\alpha A_n}{X_n+\alpha A_n+Y_n+\alpha B_n},$$ which is the probability that a particular edge from the new vertex connects to a red vertex, and note that $X_n+\alpha A_n+Y_n+\alpha B_n=(2m+\alpha)n+c$, where $c$ is a constant depending on the initial graph.  Below, we work with $q_n$, which defines a Markov process; note that almost sure convergence of $q_n$ to a limit as $n\to \infty$ also implies almost sure convergence of the conditional probability that a new vertex is red, and hence of the proportion of red vertices, $\frac{A_n}{A_n+B_n}$, and that as long as $\lim_{n\to\infty} q_n$ is a zero of $P$, these limits will be the same as for $q_n$.

\subsection{The linear case}

\begin{thm}
In the linear model, with $p_k=k/m$, we have that $q_n$ converges almost surely as $n\to\infty$ to a limit $q$, which is a random variable with a distribution with full support on $[0,1]$ and with no atoms.
\end{thm}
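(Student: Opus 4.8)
The plan is to exploit the linearity of the colour rule to show that $q_n$ is a bounded martingale, from which almost sure convergence is immediate, and then to obtain full support and the absence of atoms by arguments of the kind used for P\'olya-type processes. Write $U_n=X_n+\alpha A_n$ and $D_n=(2m+\alpha)n+c$, so that $q_n=U_n/D_n$ with $D_n$ deterministic. Adding the vertex $v_{n+1}$ increases $U_n$ by $K_{n+1}+(m+\alpha)\mathbf{1}\{v_{n+1}\text{ red}\}$, the first term recording the edges landing on red vertices and the second the degree $m$ (together with its $\alpha$-weight) contributed when the new vertex is itself red. Conditionally on $\salj_n$ each edge is red with probability $q_n$, so $\E[K_{n+1}\mid\salj_n]=mq_n$; and in the linear model $\pr(v_{n+1}\text{ red}\mid\salj_n)=\E[K_{n+1}/m\mid\salj_n]=q_n$. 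Hence $\E[U_{n+1}\mid\salj_n]=U_n+(2m+\alpha)q_n=q_nD_{n+1}$, so $\E[q_{n+1}\mid\salj_n]=q_n$. As $0\le q_n\le 1$, the martingale convergence theorem gives $q_n\to q$ almost surely for some $[0,1]$-valued $q$. A short further computation, which I would record for use in both remaining steps, gives the conditional increment variance $\E[(q_{n+1}-q_n)^2\mid\salj_n]=\kappa\,q_n(1-q_n)/D_{n+1}^2$ with $\kappa=m+(m+\alpha)(m+\alpha+2)>0$ for $\alpha>-m$.

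For full support I would combine reachability with an $L^2$ estimate. From any configuration with $q_n\in(0,1)$ there is probability at least $q_n^m$ (resp.\ $(1-q_n)^m$) that the next vertex is born with all-red (resp.\ all-blue) neighbours and takes that colour, moving $q_n$ strictly up (resp.\ down) by an amount of order $1/n$; iterating, and noting that a vertex of each colour is always present so that $q_n$ stays in $(0,1)$, one can with positive probability bring $q_N$ within $\epsilon/2$ of any target $t\in[0,1]$. Once there, the bound $\E[(q-q_N)^2\mid\salj_N]=\sum_{k\ge N}\E[(q_{k+1}-q_k)^2\mid\salj_N]=O(1/N)$ with Chebyshev's inequality makes $\pr(|q-q_N|<\epsilon/2\mid\salj_N)$ bounded below by a positive constant for $N$ large, whence $\pr(q\in(t-\epsilon,t+\epsilon))>0$. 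Since $t$ and $\epsilon$ are arbitrary, $q$ has full support.

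The hard part will be the absence of atoms. Fix $a\in(0,1)$ and suppose $\pr(q=a)>0$; on this event $q_n\to a$, so $q_n(1-q_n)\to a(1-a)>0$ and the conditional increments remain genuinely non-degenerate for all large $n$. Conditionally on $\salj_n$ the limit is the infinite sum $q=q_n+\sum_{k\ge n}(q_{k+1}-q_k)$ of martingale differences whose conditional standard deviations are of order $1/k$; the key structural fact is that the sizes of these steps have divergent sum, so no single term dominates the tail. In the idealised case of independent increments a Jessen--Wintner argument shows that an infinite convolution of non-degenerate laws with no dominant term has no atoms, and I would make this rigorous by a conditional anti-concentration estimate using only the lower bound on the per-step conditional variance and the non-dominance of the steps, so that the regular conditional law of $q$ given $\salj_n$ is non-atomic and therefore $\pr(q=a)=\E[\pr(q=a\mid\salj_n)]=0$. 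The genuine obstacle is precisely that the increments are dependent, so the classical infinite-convolution theorem does not apply verbatim and must be replaced by a martingale version adapted to the $1/k$ scaling.

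Finally the endpoints $a\in\{0,1\}$ need separate treatment, since there the conditional variance $\kappa\,q_n(1-q_n)/D_{n+1}^2$ degenerates and the argument above weakens; note that $0$ and $1$ may well lie in the support of $q$ without carrying mass, so one cannot argue that $q_n$ avoids the boundary. Instead I would show there is no atom at the boundary by comparison with the limit of a P\'olya-type urn with strictly positive parameters, which is guaranteed here by the presence of at least one vertex of each colour in $G_0$ and which produces a $\mathrm{Beta}$-type limit having no mass at $0$ or $1$; equivalently, a direct estimate showing that the mass accumulating near the boundary never concentrates at the boundary point itself. Combining the interior and boundary cases gives that $q$ has no atoms, completing the proof.
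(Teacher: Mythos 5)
Your proposal follows essentially the same route as the paper: show that $(q_n)$ is a bounded martingale (your computation of the increment $U_{n+1}-U_n=K_{n+1}+(m+\alpha)\mathbf{1}\{v_{n+1}\text{ red}\}$ and the identity $\E[U_{n+1}\mid\salj_n]=q_nD_{n+1}$ is exactly the paper's calculation), then obtain full support from reachability plus the $O(1/n^2)$ bound on squared increments, no interior atoms from a conditional-variance lower bound, and no endpoint atoms from a P\'{o}lya-urn comparison. These are precisely the four ingredients of the paper's proof, which in turn defers the last three to Theorem 1.1 of Antunovi\'{c}--Mossel--R\'{a}cz \cite{AMR}. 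Your explicit variance formula $\kappa\, q_n(1-q_n)/D_{n+1}^2$ with $\kappa=m+(m+\alpha)(m+\alpha+2)$ is correct and is a sharper version of the bound $\Var(q_{n+1}\mid\salj_n)>b/(n+1)^2$ that the paper uses, and your full-support argument (drive $q_n$ near any target $t$ with positive probability using all-red or all-blue births, then control $\E[(q-q_N)^2\mid\salj_N]=O(1/N)$ by orthogonality of martingale increments and Chebyshev) is exactly the argument of \cite{AMR} that the paper invokes.

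The one place where your write-up is not yet a proof is the no-atoms-in-$(0,1)$ step, and you say so yourself: you appeal to a Jessen--Wintner purity argument, observe correctly that it does not apply verbatim because the increments are dependent, and leave the required ``martingale version adapted to the $1/k$ scaling'' unproven. This is exactly the content that the paper imports wholesale from \cite{AMR}: there, the conditional variance lower bound near $r$ is combined with L\'{e}vy's 0--1 law --- on the event $\{q=r\}$ one has $\pr(q=r\mid\salj_N)\to 1$ almost surely, while an anti-concentration estimate, using that the tail of the martingale has conditional variance of order $1/N$ built from steps of size $O(1/N)$, caps $\pr(q=r\mid\salj_N)$ away from $1$ uniformly over configurations with $q_N$ near $r$ --- to conclude $\pr(q_n\to r)=0$. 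So your strategy and key input (the per-step variance bound) are the right ones, but the anti-concentration lemma itself still has to be proved or cited; without it the interior case is a plan rather than an argument. The endpoint case is fine at the level of detail given: your urn comparison is the paper's, with $2m+\alpha$ balls added per step and a Beta-type limit putting no mass at $0$ or $1$.
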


\begin{proof}
In the linear model, the probability that the new vertex in $G_{n+1}$ is red is $q_n$, and each of its $m$ neighbours is red with probability $q_n$, so $\E(A_{n+1}|\salj_n)=A_n+q_n$ and $\E(X_{n+1}|\salj_n)=X_n+2mq_n$.  Hence \begin{eqnarray*}\E(q_{n+1}|\salj_n) &=& \frac{1}{(2m+\alpha)(n+1)+c}(X_n+2mq_n+\alpha(A_n+q_n)) \\ &=& \frac{1}{(2m+\alpha)(n+1)+c}(((2m+\alpha)n+c)q_n+(2m+\alpha)q_n) \\ &=& q_n,\end{eqnarray*} so that $(q_n)$ is a $[0,1]$-valued martingale, and hence it converges a.s. to some limit $q$.

To show that the distribution of $q$ has full support on $[0,1]$ and that it has no atoms, we follow the proof of Theorem 1.1 in Section 2.2 of \cite{AMR}.  Firstly, we can bound $$(q_{n+1}-q_n)^2=\left(\frac{X_{n+1}+\alpha A_{n+1} - X_n-\alpha A_n}{(2m+\alpha)(n+1)+c}\right)\leq \frac{1}{(n+1)^2},$$ from where the proof of full support in $(0,1)$ follows exactly as in \cite{AMR}.  To show that there are no atoms in $(0,1)$, if $q_n$ is in a neighbourhood of a point $r\in (0,1)$ we can bound $\Var(q_{n+1}|\salj_n)>\frac{b}{(n+1)^2}$ for some constant $b$, from which again we can use the same argument as in \cite{AMR} to show $\pr(q_n\to r)=0$.  To show that there are no atoms at 0 or 1, the proof is again the same as in \cite{AMR} except that the comparison is to a P\'{o}lya urn with $2m+\alpha$ balls added at each step.
\end{proof}

\subsection{The non-linear case}

The following result shows that the limiting behaviour of $q_n$ in the affine non-linear model satisfies the same results as those found in \cite{AMR} for standard preferential attachment.

\begin{thm}
In the non-linear model, $q_n$ almost surely converges to a limit, which is a stable zero or touchpoint of the polynomial $P$ defined in \eqref{poly}, and all stable zeros of $P$ in $[0,1]$ and all touchpoints in $(0,1)$ have positive probability of being the limit.
\end{thm}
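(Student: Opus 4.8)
The plan is to recast the evolution of $q_n$ as a one-dimensional stochastic approximation recursion and then apply the standard convergence and non-convergence theorems, exactly as in the analysis underlying \cite{AMR}; the one genuinely new ingredient is the drift computation, which must be shown to reproduce $P$ (up to a strictly positive constant) for every $\alpha>-m$.

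First I would compute the conditional drift. Write $D_n=(2m+\alpha)n+c$ for the denominator, so that $X_n+\alpha A_n=q_nD_n$, and set $\rho_n=\sum_{k=0}^m\binom{m}{k}q_n^k(1-q_n)^{m-k}p_k$ for the conditional probability that the new vertex is red, its $m$ neighbours being red independently with probability $q_n$. A short calculation using $\E(X_{n+1}-X_n\mid\salj_n)=m\rho_n+mq_n$ and $\E(A_{n+1}-A_n\mid\salj_n)=\rho_n$ gives $\E(X_{n+1}+\alpha A_{n+1}-X_n-\alpha A_n\mid\salj_n)=(m+\alpha)\rho_n+mq_n$. Dividing by $D_{n+1}=D_n+(2m+\alpha)$ and subtracting $q_n$ yields
$$\E(q_{n+1}-q_n\mid\salj_n)=\frac{(m+\alpha)(\rho_n-q_n)}{D_{n+1}}.$$
Since $q_n=\sum_k\binom{m}{k}q_n^k(1-q_n)^{m-k}(k/m)$ is the binomial mean, $\rho_n-q_n=2P(q_n)$, so the drift equals $\frac{2(m+\alpha)}{2m+\alpha}P(q_n)$ to leading order. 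This is the crux: because $\alpha>-m$ forces $m+\alpha>0$ and $2m+\alpha>m>0$, the multiplicative constant $\frac{2(m+\alpha)}{2m+\alpha}$ is strictly positive and never changes sign, so the stable zeros, unstable zeros, and touchpoints of the drift coincide with those of $P$ for all admissible $\alpha$, including the delicate range $-m<\alpha<0$.

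Next I would put this in the form $q_{n+1}-q_n=\frac{1}{n+1}\big(F(q_n)+\xi_{n+1}+r_{n+1}\big)$, where $F(q)=\frac{2(m+\alpha)}{2m+\alpha}P(q)$, the term $\xi_{n+1}$ is a bounded martingale-difference noise, and $r_{n+1}=O(1/n)$ absorbs the effect of the constant $c$ in the denominator. Boundedness of $\xi_{n+1}$ follows from $|q_{n+1}-q_n|\leq C/n$, since the numerator of $q_n$ changes by a bounded amount at each step. The standard one-dimensional stochastic approximation convergence theorem, as used in \cite{AMR}, then gives that $q_n$ converges almost surely to the zero set of $F$, and the non-convergence results for unstable equilibria rule out convergence to unstable zeros, leaving only stable zeros and touchpoints. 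For the attainability statement I would follow \cite{AMR} directly: at any interior point the conditional variance of $q_{n+1}$ is bounded below by a constant multiple of $(n+1)^{-2}$, so the process has enough noise to be pushed into, and then trapped within, the basin of any given stable zero or touchpoint in $(0,1)$, yielding positive probability of each such limit. Stable zeros at the endpoints $0$ and $1$, where the noise degenerates, are handled by comparison with a P\'{o}lya urn in which $2m+\alpha$ balls are added per step, exactly as in the linear case above.

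I expect the main obstacle to lie not in the convergence part, which is essentially forced once the drift is identified, but in making the attainability and endpoint arguments uniform in $\alpha$: one must verify that the lower bound on the conditional variance holds uniformly on compact subsets of $(0,1)$, and that the P\'{o}lya urn comparison for the endpoints survives the full range $-m<\alpha<0$, where degree-based quantities can be small and the urn replacement number $2m+\alpha$ approaches $m$ from above. Once these uniformities are checked, the conclusion matches that of \cite{AMR} verbatim.
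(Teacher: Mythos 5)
Your drift computation and stochastic--approximation setup are exactly those of the paper: the same calculation gives $\E(q_{n+1}-q_n\mid\salj_n)=\frac{2(m+\alpha)}{(2m+\alpha)(n+1)+c}P(q_n)$ via the binomial-mean identity $\rho_n-q_n=2P(q_n)$, the noise is a bounded martingale difference, and the paper then quotes the same machinery you invoke (Corollary 2.7, Theorem 2.8 and Theorem 2.9 of \cite{pemantlesurvey}, together with Theorem 2.5 and a modified Lemma 2.7 of \cite{AMR}), with the same key remark that $m+\alpha>0$ and $2m+\alpha>0$ for all $\alpha>-m$, so the prefactor is positive and the zero/stability structure is that of $P$ itself. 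So in its main thrust your proposal is the paper's proof.

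There is, however, one concrete slip: you have the two endpoint arguments swapped. The P\'{o}lya urn comparison (adding $2m+\alpha$ balls per step) is what the paper uses to rule out convergence to \emph{unstable} zeros at $0$ or $1$; your blanket appeal to ``non-convergence results for unstable equilibria'' does not cover that case, because Pemantle's non-convergence theorem needs a uniform lower bound of order $(n+1)^{-2}$ on the conditional variance of $q_{n+1}$, and this bound fails as $q_n\to 0$ or $1$ (the increment is nonzero with probability tending to $0$ there). Conversely, attainability of a \emph{stable} zero at an endpoint does not need the urn at all: it is covered by the same attainability theorem as interior stable zeros, since the drift pushes toward the endpoint throughout a one-sided neighbourhood. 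Indeed, running the linear-case urn argument ``exactly'' as you propose at a stable endpoint zero would prove that the endpoint is a limit with probability zero --- the opposite of what the theorem asserts; in the linear case that argument establishes the absence of atoms at $0$ and $1$, which is the analogue of non-convergence to unstable endpoint zeros, not of attainability. So your ingredient list is complete, but the urn coupling must be attached to unstable zeros at the endpoints (as in the proof of Theorem 1.4 of \cite{AMR}), and stable endpoint zeros handled by the attainability theorem.
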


\begin{proof}
The probability that the new vertex is red is $$\sum_{k=0}^m p_k\binom{m}{k}q_n^k(1-q_n)^{m-k}=2P(q_n)+q_n,$$ and the probability that each neighbour of the new vertex is red is $q_n$.

So $$\E(A_{n+1}|\salj_n)=A_n+\sum_{k=0}^m p_k\binom{m}{k}q_n^k(1-q_n)^{m-k}$$ and $$\E(X_{n+1}|\salj_n)=X_n+mq_n+m\sum_{k=0}^m p_k\binom{m}{k}q_n^k(1-q_n)^{m-k}.$$  Combining these, $$\E(q_{n+1}|\salj_n)=\frac{1}{(2m+\alpha)(n+1)+c}\left[((2m+\alpha)n+c)q_n+(m+\alpha) \sum_{k=0}^m p_k\binom{m}{k}q_n^k(1-q_n)^{m-k}+mq_n\right],$$ from which we obtain \begin{eqnarray*}\E(q_{n+1}|\salj_n)-q_n &=& \frac{m+\alpha}{(2m+\alpha)(n+1)+c}\left[\sum_{k=0}^m p_k\binom{m}{k}q_n^k(1-q_n)^{m-k}-q_n\right] \\ &=& \frac{m+\alpha}{(2m+\alpha)(n+1)+c}\left[\sum_{k=0}^m\left(p_k-\frac{k}{m}\right)\binom{m}{k}q_n^k(1-q_n)^{m-k}\right] \\ &=& 2\frac{m+\alpha}{(2m+\alpha)(n+1)+c}P(q_n).\end{eqnarray*}

Hence $(q_n)_{n\in \N}$ satisfies the stochastic approximation $$q_{n+1}=q_n+\frac{2(m+\alpha)}{(2m+\alpha)(n+1)+c)}(P(q_n)+\xi_{n+1}),$$ where $\xi_{n+1}$ is defined as $\frac{(2m+\alpha)(n+1)+c)}{2(m+\alpha)}(q_{n+1}-\E(q_{n+1}|\salj_n))$ and therefore satisfies $\E(\xi_{n+1}|\salj_n)=0$.  As $|q_{n+1}-\E(q_{n+1}|\salj_n)|\leq \frac{2m+\alpha}{(2m+\alpha)(n+1)+c)}$, we have that $|\xi_{n+1}|\leq 2m+\alpha$, which is enough to tell us, using Corollary 2.7 of Pemantle \cite{pemantlesurvey}, that $q_n$ converges to the zero set of $P$, and, using Theorem 2.8 of \cite{pemantlesurvey}, that all stable zeros of $P$ have positive probability of being the limit.  That this also applies to touchpoints follows from Theorem 2.5 in \cite{AMR}.

If an unstable zero $r\in (0,1)$ then Lemma 2.7 of \cite{AMR} applies with $X_n$ replaced by $X_n+\alpha A_n$ and with $k_1$ and $k_2$ not necessarily integers, and hence the argument that $r$ is a limit with probability zero is an application of Theorem 2.9 of \cite{pemantlesurvey} which is essentially identical to the proof of Theorem 1.4 of \cite{AMR} in this context.  For unstable zeros of $P$ at 0 or 1, again the proof of Theorem 1.4 in \cite{AMR} works, except that the  P\'{o}lya urn used for comparison adds $2m+\alpha$ balls at each step.\end{proof}

\section{Multiplicative fitness}\label{multiplicative}

We now move to considering extensions of the model of \cite{AMR} where the types interact differently with the preferential attachment mechanism.  We first consider a multiplicative fitness model, which is inspired by that introduced by Bianconi and Barab\'{a}si \cite{bianconi} and studied in more detail by Borgs et al \cite{borgsfitness}, and which has different fitnesses for the two types.

Specifically, we assume that red vertices have fitness $1$ and blue vertices have fitness $\phi$ for some $\phi$ which we assume to be greater than $1$, so that red vertices are chosen with probability proportional to their degree and blue vertices are chosen with probability proportional to their degree times $\phi$.  We also allow a constant $\alpha>-m$ to be added to the degrees, as in Section \ref{affine}.  Formally, the model is the same as that described in Section \ref{AMRreview} except that we now have $$\pr(W^{(n+1)}_i=v|\salj_n)=\frac{(\deg_{G_n}(v)+\alpha)\phi^{T_v-1}}{\sum_{u\in V(G_n)}(\deg_{G_n}(u)+\alpha)\phi^{T_u-1}}.$$  The model differs from the models of \cite{bianconi, borgsfitness} in that the assignment of types to vertices is now based on the types of their neighbours rather than independent as in those papers.

Let $$x_n=\frac{X_n+\alpha A_n}{X_n + Y_n + \alpha(A_n+B_n)}=\frac{X_n+\alpha A_n}{(2m+\alpha)n+c},$$ where we define $c$ so that $X_0+Y_0+\alpha(A_0+B_0)=c$.     Define \begin{equation}\label{pphi}P^M(x)=\frac{2(m+\alpha)}{2m+\alpha}P\left(\frac{x}{x+\phi(1-x)}\right)+\left(\frac{x}{x+\phi(1-x)}-x\right),\end{equation} with $P$ as defined previously, and note that $P^M$ is a rational function with numerator having degree at most $m+1$, and that it cannot be identically zero for any choice of the $p_k$ and $\alpha$ if $\phi>1$.  In the special case of the linear model we have $$P^M(x)=\frac{(1-\phi)x(1-x)}{x+\phi(1-x)}.$$

\begin{lem}The sequence $(x_n)_{n\in \N}$ follows a one-dimensional stochastic approximation process associated to a flow given by $P^M$.\end{lem}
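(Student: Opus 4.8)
The plan is to repeat, for the variable $x_n$, the drift computation used in the proof for the affine non-linear model, the one new ingredient being the fitness-distorted probability that an edge lands on a red vertex. First I would identify this probability. Writing $R_n=X_n+\alpha A_n$ and $S_n=Y_n+\alpha B_n$ for the red and blue weight bases, so that $R_n+S_n=(2m+\alpha)n+c$ and $x_n=R_n/(R_n+S_n)$, the multiplicative rule attaches an extra factor $\phi$ to every blue vertex, so the probability that a given one of the $m$ new edges connects to a red vertex is $R_n/(R_n+\phi S_n)$. Dividing numerator and denominator by $R_n+S_n$ gives $\tilde q_n:=x_n/(x_n+\phi(1-x_n))$, which is exactly the argument of $P$ appearing in the definition \eqref{pphi} of $P^M$.

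Next I would condition on the types of the $m$ neighbours. Conditionally on $\salj_n$ the number $K_{n+1}$ of red neighbours is binomial with parameters $m$ and $\tilde q_n$, so, just as in the affine proof, the probability that the new vertex is red is $\sum_{k=0}^m p_k\binom{m}{k}\tilde q_n^k(1-\tilde q_n)^{m-k}=2P(\tilde q_n)+\tilde q_n$. I would then track the one-step change in $R_n$. The $m$ new edges raise the total red degree by the number of red endpoints, which is $m\tilde q_n$ in expectation; and if the new vertex is itself red it adds $m$ to $X$ and $1$ to $A$, i.e.\ $m+\alpha$ to $R$, an event of probability $2P(\tilde q_n)+\tilde q_n$. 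Hence
$$\E(R_{n+1}\mid\salj_n)=R_n+m\tilde q_n+(m+\alpha)\bigl(2P(\tilde q_n)+\tilde q_n\bigr).$$

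Dividing by $D_{n+1}:=(2m+\alpha)(n+1)+c$, subtracting $x_n$, and substituting $R_n=x_n D_n$ with $D_{n+1}-D_n=2m+\alpha$, the terms combine so that the numerator becomes $(2m+\alpha)(\tilde q_n-x_n)+2(m+\alpha)P(\tilde q_n)$. Factoring out $2m+\alpha$ and recalling the value of $\tilde q_n$, the bracket is precisely $P^M(x_n)$, giving
$$\E(x_{n+1}\mid\salj_n)-x_n=\frac{2m+\alpha}{D_{n+1}}\,P^M(x_n).$$
Setting $\gamma_{n+1}=(2m+\alpha)/D_{n+1}\sim 1/n$ and $\xi_{n+1}=\gamma_{n+1}^{-1}\bigl(x_{n+1}-\E(x_{n+1}\mid\salj_n)\bigr)$ exhibits $(x_n)$ as the stochastic approximation $x_{n+1}=x_n+\gamma_{n+1}(P^M(x_n)+\xi_{n+1})$ with $\E(\xi_{n+1}\mid\salj_n)=0$; since a single step changes $X_n+\alpha A_n$ by a bounded amount, $|x_{n+1}-\E(x_{n+1}\mid\salj_n)|=O(1/D_{n+1})$, so $\xi_{n+1}$ is bounded, which is the form required.

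The computation is routine once the correct state variable $x_n$ is chosen; the only real care is in the $R_n$-update, where one must cleanly separate the expected degree gained by pre-existing red vertices (the $m\tilde q_n$ term) from the $(m+\alpha)$-contribution of a freshly added red vertex, and then notice that the denominator growth $D_{n+1}-D_n=2m+\alpha$ is exactly what makes the drift telescope into the combination $\tilde q_n-x_n$ defining $P^M$. I would expect this algebraic collapse to be the one step worth displaying in full.
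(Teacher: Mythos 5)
Your proof is correct and follows essentially the same route as the paper: identify the fitness-distorted attachment probability $\tilde q_n = x_n/(x_n+\phi(1-x_n))$, compute the expected one-step increment of $X_n+\alpha A_n$ (splitting the $m\tilde q_n$ gain of existing red vertices from the $(m+\alpha)$ contribution of a new red vertex), and collapse the drift via the identity $\sum_{k=0}^m p_k\binom{m}{k}z^k(1-z)^{m-k}=2P(z)+z$ into $P^M(x_n)$. The only cosmetic difference is that you write the step size as $(2m+\alpha)/D_{n+1}$ where the paper writes $1/\bigl((n+1)+c/(2m+\alpha)\bigr)$, which is the same quantity.
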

\begin{proof}
Conditional on $\salj_n$, the probability that a single vertex chosen is red is $\frac{x_n}{x_n+\phi  y_n}$.  Hence we can write \begin{eqnarray*}\E(x_{n+1}|\salj_n)-x_n &=& \frac{m\frac{x_n}{x_n+\phi  y_n}+(m+\alpha)\sum_{k=0}^m\binom{m}{k}p_k\left(\frac{x_n}{x_n+\phi  y_n}\right)^k\left(\frac{\phi y_n}{x_n+\phi  y_n}\right)^{m-k}-(2m+\alpha)x_n}{(2m+\alpha)(n+1)+c}\\ &=& \frac{(2m+\alpha)\frac{x_n}{x_n+\phi  y_n}+(m+\alpha)\left(2P\left(\frac{x_n}{x_n+\phi  y_n}\right)\right)- (2m+\alpha)x_n}{(2m+\alpha)(n+1)+c}\\ &=&
\frac{1}{(n+1)+c/(2m+\alpha)}P^M(x_n),\end{eqnarray*} and so $(x_n)_{n\in \N}$ satisfies the stochastic approximation $$x_{n+1}=x_n+\frac{1}{(n+1)+c/(2m+\alpha)}(P^M(x_n)+\xi_{n+1}),$$ where $\xi_{n+1}$ is defined as $((n+1)+c/(2m+\alpha))(x_{n+1}-\E(x_{n+1}|\salj_n))$ and therefore satisfies $\E(\xi_{n+1}|\salj_n)=0$.\end{proof}

\begin{thm}\label{multthm}As $n\to\infty$, $x_n$ converges almost surely to a zero of the function $P^M$ defined in \eqref{pphi}.  Furthermore, any stable zero of $P^M$ in $[0,1]$ and any touchpoint in $(0,1)$ has positive probability of being the limit, and any unstable zero in $[0,1]$ has probability zero of being the limit.\end{thm}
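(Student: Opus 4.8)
The plan is to mirror the proof of the affine non-linear theorem, since the preceding Lemma already exhibits $(x_n)_{n\in\N}$ as a one-dimensional stochastic approximation with step size $\gamma_{n}=\frac{1}{(n+1)+c/(2m+\alpha)}$, which satisfies $\gamma_n\sim 1/n$ so that $\sum_n\gamma_n=\infty$ and $\sum_n\gamma_n^2<\infty$, and with drift given by $P^M$. Two preliminary checks set things up. First, the noise term $\xi_{n+1}$ is bounded: the numerator $X_n+\alpha A_n$ of $x_n$ changes by at most a fixed constant at each step (the $m$ new edges raise the total degree by $2m$, and a new red vertex adds at most $\alpha$ to $\alpha A_n$), so $|x_{n+1}-x_n|$ is of order $1/n$ and hence $|x_{n+1}-\E(x_{n+1}|\salj_n)|\leq C\gamma_n$, giving $|\xi_{n+1}|\leq C$ uniformly in $n$. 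Second, $P^M$ is Lipschitz on $[0,1]$: since $\phi>1$ the denominator $x+\phi(1-x)=\phi-(\phi-1)x$ stays in $[1,\phi]$ and so never vanishes on $[0,1]$, whence $P^M$ is a polynomial divided by a nonvanishing affine function and is therefore real-analytic, and in particular Lipschitz, on $[0,1]$.

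With these facts the almost sure convergence of $x_n$ to the zero set of $P^M$ is an application of Corollary 2.7 of Pemantle \cite{pemantlesurvey}, and the statement that every stable zero in $[0,1]$ is a limit with positive probability is Theorem 2.8 of \cite{pemantlesurvey}. That the same holds for touchpoints in $(0,1)$ follows from Theorem 2.5 of \cite{AMR}, exactly as in the affine case; note that because the Lemma has already expressed the dynamics directly in terms of $x_n$ and $P^M$, no further attention to the change of variable $x\mapsto\frac{x}{x+\phi(1-x)}$ is needed at this stage.

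It remains to rule out unstable zeros. For an unstable zero $r\in(0,1)$ the argument is the analogue of the proof of Theorem 1.4 of \cite{AMR} via Theorem 2.9 of \cite{pemantlesurvey}: Lemma 2.7 of \cite{AMR} applies with the relevant degree quantity taken to be $X_n+\alpha A_n$, and one shows the increment variance is bounded below near $r$, using that the probability a given new edge attaches to a red vertex, $\frac{x_n}{x_n+\phi y_n}$, is bounded away from $0$ and $1$ in a neighbourhood of $r$; this forces the process to escape any neighbourhood of $r$. For an unstable zero at $0$ or $1$ the stochastic-approximation estimates degenerate, and I would instead compare the process to a P\'olya urn as in \cite{AMR}, with the urn adding $2m+\alpha$ balls per step, but now with a replacement rule reflecting the fitness factor $\phi$ seen by the minority type near the boundary.

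The main obstacle I anticipate is precisely this boundary analysis at $0$ and $1$. Away from the boundary the proof is a direct transcription of the affine argument through Pemantle's theorems, and the only genuinely new feature is that the P\'olya-urn comparison used to exclude convergence to an unstable boundary zero must be set up carefully to incorporate the multiplicative fitness $\phi$: this breaks the symmetry present in \cite{AMR} and alters the effective per-step replacement experienced by the minority colour, so the coupling between the attachment process and the comparison urn has to be adapted accordingly.
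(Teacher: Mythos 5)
Your treatment of the interior cases coincides with the paper's: the same stochastic-approximation setup from the preceding Lemma, Corollary 2.7 and Theorem 2.8 of Pemantle for convergence and for stable zeros, Theorem 2.5 of \cite{AMR} for touchpoints, and Theorem 2.9 of Pemantle together with a modified Lemma 2.7 of \cite{AMR} for unstable zeros in $(0,1)$. The gap is the part you yourself flag as the main obstacle and leave unresolved: excluding convergence to an unstable zero at the endpoints $0$ and $1$. This is not an optional refinement --- the theorem asserts probability zero for \emph{any} unstable zero in $[0,1]$, and in the examples of Section \ref{multdiscuss} the interesting phase transitions are precisely about the endpoints --- so a sketch that says the urn coupling ``has to be adapted accordingly'' without saying how does not prove the statement.

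Moreover, the direction you propose (an urn whose replacement rule incorporates the fitness factor $\phi$) is not what is needed, and the paper's proof shows why. The comparison object is a \emph{standard} P\'{o}lya urn $(\bar{X}_n)$ adding $2m+\alpha$ balls per step, exactly as in \cite{AMR}; the fitness $\phi$ never enters the urn. Instead, $\phi$ enters only through the verification of the one-sided mean inequality $\E(X_{n+1}\mid x_n=r)\leq \E(\bar{X}_{n+1}\mid \bar{x}_n=r)$ for $r$ near the unstable endpoint: instability of the zero at $1$ means $P^M(x)<0$ on $(1-\epsilon,1)$, i.e.\ the drift of $x_n$ is subcritical relative to the martingale drift of the urn proportion, which is precisely this inequality. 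Given the mean inequality, Lemma 2.9 of \cite{AMR} upgrades it to domination $X_{n+1}\leq_{\text{lcx}}\bar{X}_{n+1}$ in the increasing convex order, using the single-crossing property of the two conditional distribution functions, which holds because the urn increments are concentrated on the two values $\{0,2m+\alpha\}$ while $X_{n+1}-X_n$ takes values in the interval $[0,2m+\alpha]$; induction then propagates the coupling, and the atomless limit of the standard urn proportion rules out convergence to $1$. In other words, the asymmetry created by $\phi$ is absorbed entirely by the sign condition $P^M<0$ near the unstable endpoint, not by re-engineering the urn. (A fitness-weighted urn of the kind you suggest does appear in this paper, but in the proof of Theorem \ref{addthm} for the additive model, where the two colours genuinely add different numbers of balls $2m+\alpha_1$ and $2m+\alpha_2$; for the multiplicative model it is unnecessary.) To complete your proof you would need either to supply this mean-inequality-plus-increasing-convex-order argument, or to construct and analyse your modified urn from scratch, which your proposal does not do.
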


\begin{proof}That the process almost surely converges to a zero of $P^M$ follows from Corollary 2.7 of Pemantle \cite{pemantlesurvey}, that any stable zero of $P^M$ has positive probability of being the limit follows from Theorem 2.8 of \cite{pemantlesurvey}, and as before Theorem 2.5 of \cite{AMR} implies that this also applies to touchpoints.  That unstable zeros in $(0,1)$ are limits with probability zero follows from Theorem 2.9 of \cite{pemantlesurvey}, again using a modification of Lemma 2.7 of \cite{AMR} for the noise condition.

To show that unstable zeros at the endpoint $1$ are limits with probability zero, we adapt the argument for the equivalent part of the proof of Theorem 1.4 in \cite{AMR}.  This argument shows that, for some sufficiently small $\epsilon>0$, as long as the process $(x_n)$ remains in $(1-\epsilon,1]$ the values $X_n$ can be coupled to a P\'{o}lya urn process $(\bar{X}_n)$ which adds $2m+\alpha$ balls at each step in such a way that $X_n\leq_{\text{lcx}} \bar{X}_n$ in the \emph{increasing convex order}; that is to say that $\E(\psi(X_n))\leq \E(\psi(\bar{X}_n))$ for all increasing convex functions for which the expectations exist.

To adapt the argument we need to show that, conditional on $x_n=\bar{x}_n=r$, $X_{n+1}\leq_{\text{lcx}}\bar{X}_{n+1}$; once we do this we can apply the same induction argument as in \cite{AMR}.  To show that, Lemma 2.9 of \cite{AMR} shows that it is enough to show that \begin{equation}\label{lcx}\E(X_{n+1}|x_n=r)\leq \E(\bar{X}_{n+1}|\bar{x}_n=r),\end{equation} where $\bar{x}_n$ denotes the proportion of red balls in the P\'{o}lya urn, and that the distribution functions $F$ and $G$ of $X_{n+1}$ and $\bar{X}_{n+1}$ respectively, both conditioned on $x_n=\bar{x}_n=r$, satisfy the property that if $t_1<t_2$ and $G(t_1)<F(t_1)$ then $G(t_2)\leq F(t_2)$.  That \eqref{lcx} holds follows from the fact that, as we are assuming $1$ is an unstable zero of $P^M$, $P^M(x)<0$ for $x\in(1-\epsilon, 1)$ for some $\epsilon$, and the property involving the distribution functions then follows, as in \cite{AMR}, from the increments of the urn process, $\bar{X}_{n+1}-\bar{X}_n$, being concentrated on $\{0,2m+\alpha\}$ while $X_{n+1}-X_n$ is supported on the interval $[0,2m+\alpha]$.

An analogous argument shows that unstable zeros at the endpoint $1$ are limits with probability zero.
\end{proof}

This allows us to conclude that in the linear model the type with the higher fitness dominates.
\begin{cor}\label{mult_linear}In the linear model with $\phi>1$ the proportion of blue vertices converges to $1$ almost surely.
\end{cor}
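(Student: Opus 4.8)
The plan is to specialise the function $P^M$ to the linear model, read off its zeros and their stability, invoke Theorem~\ref{multthm} to pin down the almost sure limit of $x_n$, and then convert this into a statement about the proportion of blue vertices.

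First I would use the explicit form $P^M(x)=\frac{(1-\phi)x(1-x)}{x+\phi(1-x)}$ recorded above. For $x\in[0,1]$ the denominator $x+\phi(1-x)=\phi-(\phi-1)x$ lies between $1$ and $\phi$ and so is strictly positive, while $x(1-x)\geq 0$ with equality only at the endpoints; since $\phi>1$ gives $1-\phi<0$, we get $P^M(x)<0$ for every $x\in(0,1)$ and $P^M(0)=P^M(1)=0$. Hence the only zeros of $P^M$ in $[0,1]$ are $0$ and $1$. Checking the sign conditions from the definition of stability, at $x=0$ the requirement on $(-\epsilon,0)\cap[0,1]=\emptyset$ is vacuous and $P^M<0$ just to the right, so $0$ is a stable zero; at $x=1$ we have $P^M<0$ just to the left rather than $>0$, so $1$ is unstable.

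Next I would apply Theorem~\ref{multthm}: $x_n$ converges almost surely to a zero of $P^M$, and every unstable zero in $[0,1]$ is the limit with probability zero. As the only zeros are $0$ and $1$ and $1$ is unstable, it follows that $x_n\to 0$ almost surely.

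The remaining, and I think most delicate, step is to pass from $x_n\to 0$ (a statement about weighted degree) to $\frac{B_n}{A_n+B_n}\to 1$ (a statement about vertex counts). I would do this via a crude lower bound on the red weighted degree in terms of the number of red vertices. Every vertex added after time $0$ has degree at least $m$, so contributes at least $m+\alpha>0$ to $X_n+\alpha A_n=\sum_{v\text{ red}}(\deg_{G_n}(v)+\alpha)$, while the finitely many vertices of $G_0$ contribute at least a fixed constant in total. This yields $(m+\alpha)A_n\leq X_n+\alpha A_n+C=x_n((2m+\alpha)n+c)+C$ for some constant $C$, whence $\frac{A_n}{n}\leq \frac{(2m+\alpha)x_n+o(1)}{m+\alpha}\to 0$. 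Since $A_n+B_n=n+(A_0+B_0)$, this gives $\frac{A_n}{A_n+B_n}\to 0$ and therefore $\frac{B_n}{A_n+B_n}\to 1$ almost surely. Alternatively, one can avoid the degree bound and argue directly that, since in the linear model the conditional probability that the new vertex is red equals $\frac{x_n}{x_n+\phi(1-x_n)}\to 0$, writing $A_n$ as this predictable sum plus a bounded-increment martingale and applying Cesàro summation together with the martingale strong law gives $A_n/n\to 0$. The main thing to get right is the bookkeeping for the initial vertices and the fact that $m+\alpha>0$ (guaranteed by $\alpha>-m$), which is what makes the weighted-degree bound usable.
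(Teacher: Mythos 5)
Your proof is correct and takes essentially the same approach as the paper: specialise $P^M$ to the linear case, observe it is strictly negative on $(0,1)$ so that $0$ is the only stable zero (and $1$ is unstable), and invoke Theorem \ref{multthm} to conclude $x_n\to 0$ almost surely. The only difference is that the paper treats the final passage from $x_n\to 0$ to convergence of the vertex proportion as immediate (``giving the result''), whereas you make it rigorous via the bound $(m+\alpha)A_n\leq X_n+\alpha A_n+C$, which is a legitimate and careful filling-in of that omitted step.
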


\begin{proof}Because $x+\phi(1-x)>1$ for $x<1$, we have that $P^M(x)<0$ for $0<x<1$, and so $0$ is the only stable zero of $P^M$.  Hence, by Theorem \ref{multthm}, $x_n\to 0$ almost surely, giving the result.\end{proof}

\subsection{Discussion and examples}\label{multdiscuss}

In these examples we concentrate on cases where $p_0=0$ and $p_m=1$; if $p_0>0$ then dominance by blue vertices is impossible, and similarly if $p_m<1$ dominance by red vertices is impossible.  We also assume $\phi>1$, meaning that blue vertices have higher fitness.  We first consider the case when $\alpha=0$, and then consider the effect of varying $\alpha$.

\subsubsection{The case $\alpha=0$}

If $p_0=0$ and $p_m=1$ then both $0$ and $1$ are zeros of $P^M$.  It is not hard to show that $0$ is a stable zero, indicating positive probability of blue dominance, if $\phi>\frac12 m\left(p_1-\frac1m\right)+1=\frac12(mp_1+1),$ and an unstable zero if $\phi<\frac12(mp_1+1)$; if $\phi=\frac12(mp_1+1)$ the stability depends on the other $p_k$.  Similarly $1$ is a stable zero, indicating positive probability of red dominance, if $\phi<\frac{2}{m(1-p_{m-1})+1},$ and an unstable zero if $\phi>\frac{2}{m(1-p_{m-1})+1}$; again if we have equality the stability depends on the other $p_k$.  Note that if $\phi>2$ then $\phi>\frac{2}{m(1-p_{m-1})+1}$ always holds, so red dominance cannot have positive probability in that case.

When $m=2$ the fixed point at $0$ is stable if $p_1<\phi-\frac12$, which is always true if $\phi>\frac32$, and the fixed point at $1$ is stable if $p_1>\frac32-\frac{1}{\phi}$, which requires $\phi<2$.  Hence if $\phi>2$ blue vertices dominate almost surely, whatever the value of $p_1$, while if $\frac32<\phi<2$, then there is always a positive probability of blue domination, while red domination happens with positive probability if $p_1>\frac32-\frac{1}{\phi}$.
If $1<\phi<\frac32$, then red domination happens with positive probability if $p_1>\frac32-\frac{1}{\phi}$ and blue domination happens with positive probability if $p_1<\phi-\frac12$.  One of these criteria is always satisfied, as $\frac32-\frac{1}{\phi}<\phi-\frac12$ in this range.  As the numerator of $P^M$ is a cubic with roots at $0$ and $1$, at least one of which is stable, there can be no other stable fixed point in $(0,1)$, so with probability $1$ one of the types dominates in the limit.

When $m=3$, we consider some examples of how the behaviour varies with $\phi$ for different choices of $p_1$ and $p_2$.

\begin{enumerate}
\item $m=3$, $p_0=0,p_1=p_2=\half,p_3=1$.  In this case there is almost sure co-existence in the case where the two types have the same fitness, $\phi=1$.  If $\phi<5/4$, the zeros of $P^M$ at $0$ and $1$ are both unstable, with a stable zero in $(0,1)$, so this remains true, but with the limiting proportion of red decreasing with $\phi$.  For $\phi\geq 5/4$, the zero of $P^M$ at $0$ becomes stable, and there is no stable zero in $(0,1)$, so blue dominance occurs almost surely.  See the left plot in Figure \ref{fig} for plots of $P^M$ either side of the phase transition at $\phi=5/4$.
\item $m=3$, $p_0=0,p_1=\frac14,p_2=\frac34,p_3=1$.  In this case, if the types have the same fitness then almost surely one of the two types dominates, with both having positive probability of doing so.  If $\phi<8/7$, then both $0$ and $1$ are stable zeros of $P^M$ and other zeros are unstable, so this remains the case.  If $\phi\geq 8/7$ then the zero at $1$ becomes unstable, and the other zeros are outside $(0,1)$, so blue dominance occurs with probability $1$.
\item $m=3$, $p_0=0,p_1=p_2=p_3=1$.  In this case the type assignment mechanism has a bias towards red, as the new vertex will be red if it connects to any red vertices.  There are two phase transitions in $\phi$.  For $\phi\leq 2$ the zero of $P^M$ at $1$ remains stable and that at $0$ remains unstable, indicating that the bias in type assignment mechanism dominates the fitness effect so that red dominance still occurs almost surely.  For $2<\phi\leq \frac{3+\sqrt{2}}{2}$ the zero at $1$ becomes unstable and that at $0$ becomes stable, but there is also a stable zero in $(0,1)$, so that both blue dominance and co-existence have positive probability.  For $\phi>\frac{3+\sqrt{2}}{2}$, the only stable zero of $P^M$ is at $0$, so the fitness effect now dominates and blue dominance occurs almost surely.  See the middle plot in Figure \ref{fig} for plots of $P^M$ for values of $\phi$ above, below and between the two phase transitions.
\item \label{biasblue} $m=3$, $p_0=p_1=0,p_2=9/10,p_3=1$.  Here there is a slight bias in the type assignment mechanism towards blue.  However, for $\phi<20/13$, both the zeros of $P^M$ at $0$ and $1$ remain stable.  Hence, in this case we see that there is positive probability of the less fit type dominating, even though there is a bias in the type assignment mechanism against it as well as the fitness effect.  For $\phi\geq 20/13$ the only stable zero of $P^M$ in $[0,1]$ is at $0$, so blue dominance occurs almost surely.  See the right plot in Figure \ref{fig} for plots of $P^M$ either side of the phase transition at $\phi=20/13$. We investigate this example further by simulation in Section \ref{limitdist}.
\end{enumerate}

\begin{figure}
\begin{centre}
\begin{tabular}{ccc}\scalebox{0.3}{{\includegraphics{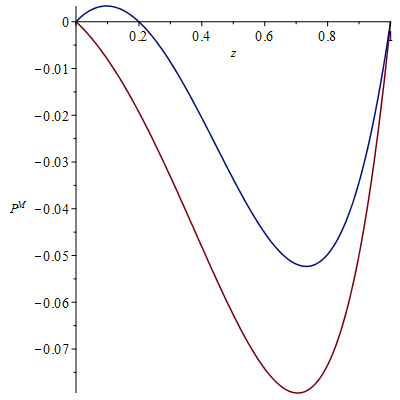}}} & \scalebox{0.3}{{\includegraphics{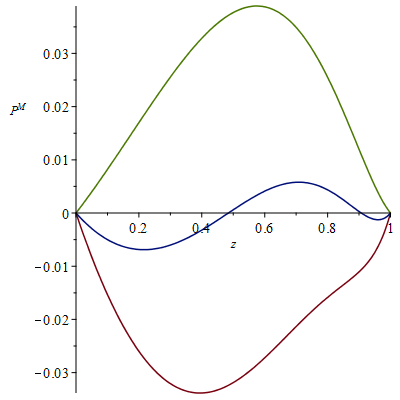}}} & \scalebox{0.3}{{\includegraphics{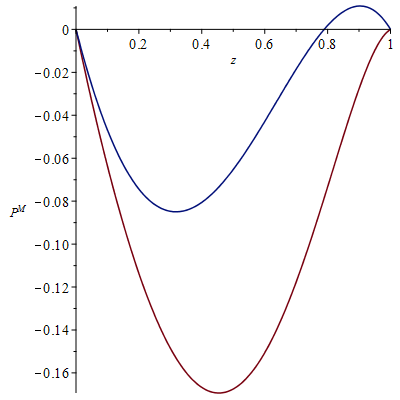}}} \end{tabular}
\caption{Plots of $P^M$ for $m=3$.  Left plot: $p_0=0,p_1=p_2=\half,p_3=1$, upper curve $\phi=7/6$, lower curve $\phi=4/3$.  Middle plot: $p_0=0,p_1=p_2=p_3=1$, top curve $\phi=13/7$, middle curve $\phi=15/7$, bottom curve $\phi=17/7$.  Right plot: $p_0=p_1=0,p_2=9/10,p_3=1$, upper curve $\phi=7/6$, lower curve $\phi=5/3$.  Plots created using Maple. \label{fig}}
\end{centre}
\end{figure}

\subsubsection{The effect of varying $\alpha$}

In Section \ref{affine} we showed that varying $\alpha$ does not change the results of \cite{AMR} in the model without fitness.  By considering the form of $P^M$, we can see that this is not the case in the multiplicative fitness model; indeed as $\alpha\to -m$ with $\phi$ and the $p_k$ fixed we get $P^M(x)\to \frac{x}{x+\phi(1-x)}-x=\frac{(1-\phi)x(1-x)}{x+\phi(1-x)}$, which is negative for all $x\in (0,1)$ for $\phi>1$.  Hence, for any choice of $\phi>1$ and the $p_k$ with $p_0=0$ and $p_m=1$, we get that if $\alpha$ is small enough $P^M$ is negative on $(0,1)$ and so blue dominance occurs almost surely.

\section{Additive fitness}\label{additive}

We now extend the model of Section \ref{affine} by allowing the two types to have different values of $\alpha$, which can be seen as corresponding to different fitnesses of the two types: if $\alpha_2>\alpha_1$, blue vertices are ``fitter'' as they are more likely to be chosen than red vertices of the same degree.  This model thus resembles the additive fitness model of Erg\"{u}n and Rodgers \cite{ergun2002}, whose degree distribution is analysed in detail by Bhamidi \cite{bhamidi2007}, except that, as in the multiplicative model of Section \ref{multiplicative}, the fitnesses of the new vertices correspond to their types, and hence are influenced by the types of their neighbours, rather than being independent random variables as in the model of \cite{ergun2002}.

The model is as described in Section \ref{AMRreview}, except that now $$\pr(W^{(n+1)}_i=v|\salj_n)=\frac{\deg_{G_n}(v)+\alpha_{T_v}}{\sum_{u\in V(G_n)}(\deg_{G_n}(u)+\alpha_{T_u})},$$  In this section, we assume $\alpha_1\neq \alpha_2$ and, to avoid degeneracies, we assume $\alpha_1,\alpha_2>-m$.  Without loss of generality we assume $\alpha_2>\alpha_1$, so that blue vertices are fitter than red.

The probability of a new edge at time $n+1$ connecting to an existing red vertex is now $$q_n=\frac{X_n+\alpha_1 A_n}{X_n+\alpha_1 A_n+Y_n+\alpha_2 B_n}.$$  Define $x_n=\frac{X_n+\alpha_1 A_n}{n}$ and $y_n=\frac{Y_n+\alpha_2 B_n}{n}$, so that $q_n=\frac{x_n}{x_n+y_n}$.  We consider the bivariate process $((x_n,y_n))_{n\in\N}$.

Define $$P^A(z)=(\alpha_1-\alpha_2)z(1-z)+[2(m+\alpha_1)+2(\alpha_2-\alpha_1)z]P(z).$$  Note that $P^A$ is a polynomial, and that it is not identically zero unless $\alpha_1=\alpha_2$.  Our aim is to prove the following theorem.

\begin{thm}\label{addthm}
	Assume that we have $0<p_k<1$ for $0<k<m$.  Then, almost surely, $(x_n,y_n)\to (x,y)$ as $n\to\infty$, where $(x,y)$ is a (possibly random limit) such that $P^A\left(\frac{x}{x+y}\right)=0$.  Furthermore, any zero $q$ of $P^A$ which is stable in the sense that $(P^A)'(q)<0$ has positive probability of having $\frac{x}{x+y}=q$.
\end{thm}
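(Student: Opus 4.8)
The plan is to realise $((x_n,y_n))_{n\in\N}$ as a two-dimensional Robbins--Monro stochastic approximation and then to analyse the associated mean-field flow. First I would compute the conditional increments exactly as in the affine case: writing $q=\frac{x}{x+y}$, a given new edge attaches to red with probability $q_n$ and the new vertex is red with probability $2P(q_n)+q_n$, so the total weights $X_n+\alpha_1A_n$ and $Y_n+\alpha_2B_n$ have bounded, explicitly computable conditional increments. Dividing by $n$ yields
\begin{align*}
\E(x_{n+1}-x_n\mid\salj_n)&=\frac{1}{n+1}\bigl(-x_n+G_1(q_n)\bigr),\\
\E(y_{n+1}-y_n\mid\salj_n)&=\frac{1}{n+1}\bigl(-y_n+G_2(q_n)\bigr),
\end{align*}
where $G_1(q)=(2m+\alpha_1)q+2(m+\alpha_1)P(q)$ and $G_2(q)=(2m+\alpha_2)(1-q)-2(m+\alpha_2)P(q)$. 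Since the increments of $X_n,Y_n,A_n,B_n$ are bounded, the centred terms $\xi_{n+1}=(n+1)\bigl((x_{n+1},y_{n+1})-\E((x_{n+1},y_{n+1})\mid\salj_n)\bigr)$ are bounded martingale differences, putting $(x_n,y_n)$ in standard form with gain $\frac1{n+1}$ and drift field $F(x,y)=(-x+G_1(q),\,-y+G_2(q))$.

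The key step is to pass to the coordinates $s=x+y$ and $q=\frac{x}{x+y}$, in which the mean-field ODE $\dot u=F(u)$ becomes triangular. A direct computation gives $\dot s=-s+\Sigma(q)$ with $\Sigma=G_1+G_2$, while
\[
\dot q=\frac{1}{s}\bigl((1-q)F_1-qF_2\bigr)=\frac{1}{s}\bigl((1-q)G_1(q)-qG_2(q)\bigr)=\frac{P^A(q)}{s},
\]
the last identity being precisely the algebraic fact that $(1-q)G_1(q)-qG_2(q)=P^A(q)$. Since $\alpha_1,\alpha_2>-m$ while $X_n+Y_n=2mn+O(1)$ and $A_n+B_n=n+O(1)$, the total $s_n$ stays bounded away from $0$ and $\infty$, so $1/s>0$ and $\dot q$ always has the sign of $P^A(q)$. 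Thus $q$ moves monotonically between consecutive zeros of $P^A$ and converges to one of them, after which $s\to\Sigma(q)$; the equilibria of $F$ correspond exactly to the zeros of $P^A$ (paired with $s=\Sigma(q)$), and the flow admits no periodic orbits or other recurrence. Hence the internally chain-transitive sets of the flow are precisely the isolated equilibria.

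Convergence then follows from the general theory: with bounded noise and gain $\sim 1/n$, the limit set of $(x_n,y_n)$ is a connected internally chain-transitive set (the multi-dimensional form of Corollary~2.7 of \cite{pemantlesurvey}), which by the previous paragraph must be a single equilibrium; therefore $(x_n,y_n)\to(x,y)$ almost surely with $P^A\bigl(\frac{x}{x+y}\bigr)=0$. For the positive-probability statement I would linearise the $(q,s)$ system at an equilibrium $(q^\ast,s^\ast)$ with $s^\ast=\Sigma(q^\ast)$: the triangular structure gives a Jacobian with eigenvalues $\frac{(P^A)'(q^\ast)}{s^\ast}$ and $-1$, so $(P^A)'(q^\ast)<0$ makes the equilibrium linearly stable, i.e.\ an attractor of the flow. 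By the multidimensional analogue of Theorem~2.8 of \cite{pemantlesurvey}, any attractor is the limit with positive probability, the required non-degeneracy of the noise being supplied by the hypothesis $0<p_k<1$ for $0<k<m$, which ensures both colours remain genuinely possible so that the process can enter the basin of each attractor.

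The main obstacle I anticipate is the rigorous justification of convergence to equilibria in the two-dimensional setting, where in general one must exclude limit cycles and spurious chain-recurrent sets; here the triangular form $\dot q=P^A(q)/s$ is exactly what makes this possible, reducing the angular dynamics to a time-changed one-dimensional flow governed by $P^A$. Care is nonetheless needed to keep $s$ bounded away from $0$, and to treat the boundary zeros $q^\ast\in\{0,1\}$, where the attractor lies on $\partial[0,1]^2$, when invoking the positive-probability result.
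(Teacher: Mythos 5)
Your proposal is correct in substance and takes a genuinely different route from the paper, although both hinge on the same key identity: the angular derivative $\frac{d}{dt}q(t)=P^A(q(t))/(x(t)+y(t))$. The paper stays in $(x,y)$ coordinates and constructs an explicit strict Lyapunov function $L=L_2+2\frac{S_2^2}{S_1}L_1$, where $L_1(z)=-\int_1^z P^A(u)\,du$ and $L_2=\ell^2$ with $\ell$ the drift of the linear functional $(2m+\alpha_2)x+(2m+\alpha_1)y$; convergence then follows from Proposition 2.18 of Pemantle's survey, and the positive-probability statement from identifying stable zeros with local minima of $L$ and citing Theorem 2.16 there. You instead pass to the skew-product coordinates $(q,s)$, exploit the triangular structure $\dot{s}=-s+\Sigma(q)$, $\dot{q}=P^A(q)/s$, characterize the limit sets via internally chain-transitive (ICT) sets, and obtain stability by linearization, with eigenvalues $(P^A)'(q^\ast)/s^\ast$ and $-1$. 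Your packaging is arguably more transparent: it exposes why the dynamics is essentially one-dimensional, and the eigenvalue computation explains the stability criterion $(P^A)'(q)<0$ directly, whereas the paper's combination $L_2+cL_1$ achieves the same end less intuitively. Your observation that $s_n$ is bounded away from $0$ and $\infty$ matches the paper's bound $2m+\alpha_1\leq x_n+y_n\leq 2m+\alpha_2$, and your drift functions $G_1,G_2$ agree exactly with the paper's $F_1,F_2$.

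The one step you state too quickly is the assertion that, because the flow ``admits no periodic orbits or other recurrence,'' the ICT sets are precisely the isolated equilibria. That implication is false in general: ICT sets can be heteroclinic cycles or other chain-recurrent continua even for flows with no nontrivial recurrent orbits, so something specific to this flow must be invoked. The fix uses only ingredients you already have, in two steps. First, $L_1(q)$ is a Lyapunov function for the set $\{(q,s):P^A(q)=0\}$, whose value set under $L_1$ is finite and hence has empty interior; by the standard result relating Lyapunov functions to ICT sets, any connected ICT set lies in a single vertical line $\{q=q^\ast\}$ with $P^A(q^\ast)=0$. Second, the flow restricted to that line is the contraction $\dot{s}=-s+\Sigma(q^\ast)$, whose only ICT set is the point $s=\Sigma(q^\ast)$. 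This two-step argument is exactly what the paper's combined function $L_2+cL_1$ accomplishes in one stroke (forcing both $P^A(q)=0$ and $\ell=0$ at any limit). With that patch your convergence argument is complete; you are also right to flag that for boundary zeros $q^\ast\in\{0,1\}$ (which occur in the paper's examples, since only $0<p_k<1$ for $0<k<m$ is assumed) the attractor lies on the boundary of the state space and attainability must be checked before applying the positive-probability theorem --- the same point at which the paper's appeal to Pemantle's Theorem 2.16 implicitly relies on the noise condition supplied by $0<p_k<1$.
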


To move towards proving Theorem \ref{addthm}, we first consider the state space of $(x_n,y_n)$; we first note that, for all $n\geq 0$, \begin{equation}\label{easybound}2m+\alpha_1 \leq x_n+y_n\leq 2m+\alpha_2.\end{equation}  We then observe that as $$Y_n+\alpha_2 B_n=2mn-X_n+\alpha_2(n-A_n),$$ we have \begin{equation}\label{ybound}y_n=2m+\alpha_2-x_n-(\alpha_2-\alpha_1)\frac{A_n}{n}.\end{equation}  Furthermore, as each vertex has degree at least $m$, we have \begin{equation}\label{Abound}A_n\leq \frac{1}{m} X_n,\end{equation} which implies $$A_n\left(1+\frac{\alpha_1}{m}\right)\leq \frac{1}{m}(X_n+\alpha_1 A_n),$$ and hence $$\frac{A_n}{n}\leq \frac{x_n}{m+\alpha_1}.$$  Combining this with \eqref{ybound} gives \begin{equation}\label{ybound2} y_n \geq 2m+\alpha_2-x_n\left(\frac{m+\alpha_2}{m+\alpha_1}\right).\end{equation}
An analogous argument gives \begin{equation}\label{xbound}x_n\leq 2m+\alpha_1-y_n\left(\frac{m+\alpha_1}{m+\alpha_2}\right).\end{equation}
The state space is then the parallelogram, which we call $D$, given by \eqref{easybound}, \eqref{ybound2} and \eqref{xbound}.

We note that \eqref{Abound} is tight if $p_i\in (0,1)$ for all $i\in\{0,1,\ldots,m\}$.  If $p_0=0$ and $p_m=1$, as we usually assume, then \eqref{Abound} can be improved to $$A_n\leq \frac{1}{m+1} X_n,$$ as now any red vertex must have at least one red neighbour, and this is tight if the other $p_i\in (0,1)$.  Hence \eqref{ybound2} and \eqref{xbound} are modified to \begin{equation}\label{ybound3} y_n \geq 2m+\alpha_2-x_n\left(\frac{m+1+\alpha_2}{m+1+\alpha_1}\right)\end{equation} and \begin{equation}\label{xbound2}x_n\leq 2m+\alpha_1-y_n\left(\frac{m+1+\alpha_1}{m+2+\alpha_2}\right).\end{equation} Define $D_0$ to be the parallelogram given by \eqref{easybound}, \eqref{ybound3} and \eqref{xbound2}; in this case the state space can be thought of as $D_0$ rather than $D$.

Now define \begin{eqnarray*}F_1(x,y) &=& m\frac{x}{x+y}+(m+\alpha_1)\left(\sum_{k=0}^m p_k\binom{m}{k}\left(\frac{x}{x+y}\right)^k\left(\frac{y}{x+y}\right)^{m-k}\right)-x \\ &=& (2m+\alpha_1)\frac{x}{x+y}+2(m+\alpha_1)P\left(\frac{x}{x+y}\right)-x\end{eqnarray*} and \begin{eqnarray*}F_2(x,y)&=& m\frac{y}{x+y}+(m+\alpha_2)\left(\sum_{k=0}^m  (1-p_k)\binom{m}{k}\left(\frac{x}{x+y}\right)^k\left(\frac{y}{x+y}\right)^{m-k}\right)-y\\&=& (2m+\alpha_2)\frac{y}{x+y}-2(m+\alpha_2)P\left(\frac{x}{x+y}\right)-y,\end{eqnarray*} and furthermore define $F:D\to\R^2$ by $F(x,y)=(F_1(x,y),F_2(x,y))$.  \begin{lem}We have that $((x_n,y_n))_{n\in\N}$ follows a bivariate stochastic approximation process associated to the flow defined by $F$.\end{lem}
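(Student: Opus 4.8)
The plan is to prove this by direct computation of the one-step conditional moments of the weighted degree totals $X_n+\alpha_1 A_n$ and $Y_n+\alpha_2 B_n$, exactly parallel to the one-dimensional computations already carried out in the affine and multiplicative cases, and then to read off the Robbins--Monro form. The essential observation is that, conditional on $\salj_n$, all of the randomness at step $n+1$ is controlled by the single quantity $q_n=\frac{x_n}{x_n+y_n}$: each of the $m$ edges of the new vertex independently joins an existing red vertex with probability $q_n$, and independently the new vertex is assigned colour red with probability $\sum_{k=0}^m p_k\binom mk q_n^k(1-q_n)^{m-k}=2P(q_n)+q_n$ and colour blue with the complementary probability $(1-q_n)-2P(q_n)$.

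First I would compute the contribution to each degree total. The quantity $X_n$ increases by one for each of the $m$ edges landing on an existing red vertex, contributing expected increment $mq_n$, and by a further $m$ if the new vertex is itself red; combining this with $A_{n+1}=A_n+\mathbf 1\{\text{new vertex red}\}$ gives
\begin{equation*}\E(X_{n+1}+\alpha_1 A_{n+1}\mid\salj_n)=(X_n+\alpha_1 A_n)+mq_n+(m+\alpha_1)(2P(q_n)+q_n),\end{equation*}
and symmetrically for $Y_n+\alpha_2 B_n$ with $q_n$ replaced by $1-q_n$, $P$ by $-P$, and $\alpha_1$ by $\alpha_2$. Dividing by $n+1$ and using $x_n=\frac{X_n+\alpha_1 A_n}{n}$, the elementary identity $\frac{a+\Delta}{n+1}-\frac an=\frac{\Delta-a/n}{n+1}$ (with $a=X_n+\alpha_1A_n$) lets me collect terms so that
\begin{equation*}\E(x_{n+1}\mid\salj_n)-x_n=\frac{1}{n+1}\big((2m+\alpha_1)q_n+2(m+\alpha_1)P(q_n)-x_n\big)=\frac{1}{n+1}F_1(x_n,y_n),\end{equation*}
and likewise $\E(y_{n+1}\mid\salj_n)-y_n=\frac{1}{n+1}F_2(x_n,y_n)$; here the identity $\sum_k p_k\binom mk q^k(1-q)^{m-k}=2P(q)+q$ is precisely what reconciles the two equivalent expressions given for each $F_i$.

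Finally I would define $\xi^{(i)}_{n+1}=(n+1)\big((\cdot)_{n+1}-\E((\cdot)_{n+1}\mid\salj_n)\big)$ for the two coordinates, which have conditional mean zero by construction, so that $(x_n,y_n)$ obeys $(x_{n+1},y_{n+1})=(x_n,y_n)+\frac{1}{n+1}\big(F(x_n,y_n)+\xi_{n+1}\big)$, the required bivariate stochastic approximation with step size $\frac{1}{n+1}$ and mean field $F$. The one point needing care, and the main (if mild) obstacle, is the noise bound: because the normalisations use $n$ and $n+1$ rather than a common denominator, I would work with the exact increment $x_{n+1}-x_n=\frac{\Delta_n-x_n}{n+1}$, where $\Delta_n\in[0,2m+\alpha_1]$ is the increment of $X_n+\alpha_1 A_n$ (bounded because $\alpha_1>-m$) and $x_n$ is bounded via \eqref{easybound}. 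This yields $|x_{n+1}-x_n|\le C/(n+1)$ and hence a uniformly bounded martingale-difference noise $|\xi_{n+1}|\le C$, exactly the regularity that will be needed when invoking the two-dimensional stochastic approximation theory in the proof of Theorem \ref{addthm}; everything else is routine bookkeeping.
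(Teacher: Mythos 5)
Your proposal is correct and takes essentially the same approach as the paper: compute $\E(X_{n+1}+\alpha_1 A_{n+1}\mid\salj_n)$ and its blue counterpart, normalise by $n+1$ to obtain $\E(x_{n+1}\mid\salj_n)-x_n=\frac{1}{n+1}F_1(x_n,y_n)$ and the analogous identity for $y_n$, and define the noise as the centred increments scaled by $n+1$, exactly as the paper does. One minor wording correction: the new vertex's colour is \emph{not} assigned independently of the edge endpoints (it is chosen with probability $p_{K_{n+1}}$, where $K_{n+1}$ counts the red neighbours), only its marginal probability equals $2P(q_n)+q_n$; since your computation uses nothing beyond this marginal and linearity of expectation the proof is unaffected, and the uniform noise bound you add at the end is not in the paper's proof of this lemma but is a harmless (and later useful) addition.
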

\begin{proof}
We have $$\E(X_{n+1}+\alpha_1 A_{n+1}|\salj_n)=X_n+\alpha_1 A_n+mq_n+(m+\alpha_1)\sum_{k=0}^m p_k\binom{m}{k}q_n^k(1-q_n)^{m-k},$$ giving \begin{eqnarray*}\E(x_{n+1}|\salj_n) &=& x_n+\frac{1}{n+1}\left(mq_n+(m+\alpha_1)\sum_{k=0}^m p_k\binom{m}{k}q_n^k(1-q_n)^{m-k}-x_n\right)\\ &=& x_n+\frac{1}{n+1}\left((2m+\alpha_1)q_n+2(m+\alpha_1)P(q_n)-x_n \right) \\&=& x_n+\frac{1}{n+1}F_1(x_n,y_n),\end{eqnarray*} and similarly we get $$\E(y_{n+1}|\salj_n)=y_n+\frac{1}{n+1}F_2(x_n,y_n).$$  Thus $$(x_{n+1},y_{n+1})=(x_n,y_n)+\frac{1}{n+1}(F(x_n,y_n)+(\xi_{n+1},\eta_{n+1})),$$ where $\xi_{n+1} = (n+1)(x_{n+1}-\E(x_{n+1}|\salj_n))$ and $\eta_{n+1} = (n+1)(y_{n+1}-\E(y_{n+1}|\salj_n))$, so that $\E((\xi_{n+1},\eta_{n+1})|\salj_n)=(0,0)$.\end{proof}

We now analyse this stochastic approximation to prove Theorem \ref{addthm}.

\textit{Proof of Theorem \ref{addthm}}.  We construct a Lyapunov function for the flow defined by $F$ as follows.  Let $((x(t),y(t)))_{t\in \R^{+}}$ be a trajectory of the flow, and define $q(t)=\frac{x(t)}{x(t)+y(t)}$.

Note that $$\frac{d}{dt}q(t)=\frac{y(t)F_1(x(t),y(t))-x(t)F_2(x(t),y(t))}{(x(t)+y(t))^2}=\frac{P^A(q(t))}{x(t)+y(t)},$$ and define $L_1(z)=-\int_1^z P^A(u)\; du$.  Then $$\frac{d}{dt}L_1(q(t))=-(x(t)+y(t))^{-1}\left(P^A(q(t))\right)^2\leq 0,$$ with equality only when $P^A(q(t))=0$.  Let $S_1=\inf_{(x,y)\in D}(x+y)^{-1}$, so that $\frac{d}{dt}L_1(q(t))\leq -S_1 \left(P^A(q(t))\right)^2$.

Define $$\ell(x,y)=(2m+\alpha_1)(2m+\alpha_2)+2m(\alpha_1-\alpha_2)P\left(\frac{x}{x+y}\right)-(2m+\alpha_2)x-(2m+\alpha_1)y,$$ so that $$\frac{d}{dt}((2m+\alpha_2)x(t)+(2m+\alpha_1)y(t))=\ell(x(t),y(t)),$$ and define $L_2(x,y)=(\ell(x,y))^2$.

Then $\frac{d}{dt}L_2(x(t),y(t))$ is \begin{eqnarray*} & & 2\ell(x(t),y(t))\left(F_1(x(t),y(t))\left(2m(\alpha_1-\alpha_2)\frac{y(t)}{(x(t)+y(t))^2}P'(q(t))-(2m+\alpha_2)\right) \right)\\ & & + 2\ell(x(t),y(t))\left(
\left(F_2(x(t),y(t))\left(-2m(\alpha_1-\alpha_2)\frac{x(t)}{(x(t)+y(t))^2}P'(q(t))-(2m+\alpha_1)\right) \right) \right)\\ &=&  2\ell(x(t),y(t))\left(-(2m+\alpha_1)(2m+\alpha_2)+2m(\alpha_2-\alpha_1)P(q(t))+(2m+\alpha_2)x+(2m+\alpha_1)y\right) \\ & & +\; 4m\ell(x(t),y(t))(\alpha_1-\alpha_2)P'(q(t))
\left(\frac{x(t)y(t)}{(x(t)+y(t))^3}(\alpha_1-\alpha_2)+P(q(t))\frac{2(m+\alpha_1)y(t)+2(m+\alpha_2)x(t)}{(x(t)+y(t))^2}\right) \\ &=& -2\left((\ell(x(t),y(t)))^2-\frac{2m(\alpha_1-\alpha_2)}{x(t)+y(t)}P'(q(t))\ell(x(t),y(t))P^A(q(t))\right).\end{eqnarray*}  As we assume $\alpha_1<\alpha_2$, define $$S_2=\sup_{(x,y)\in D}m(\alpha_2-\alpha_1)P'\left(\frac{x}{x+y}\right)\frac{1}{x+y};$$ then $$\frac{d}{dt}L_2(x(t),y(t))\leq -2\left((\ell(x(t),y(t)))^2-2S_2\ell(x(t),y(t))P^A(q(t))\right).$$  Define $$L(x,y)=L_2(x,y)+2\frac{S_2^2}{S_1}L_1\left(\frac{x}{x+y}\right),$$ so that we have $$\frac{d}{dt}L(x(t),y(t))\leq -2\left(\ell(x(t),y(t))+S_2P^A(q(t))\right)^2\leq 0,$$ with equality in the right inequality only when $\ell(x(t),y(t))=0$ and $P^A(q(t))=0$.  Hence $L$ is a Lyapunov function for $F$ with stationary points $(x,y)$ given by $\ell(x,y)=0$ and $P^A\left(\frac{x}{x+y}\right)=0$; therefore our stochastic approximation process will converge to one of these by Proposition 2.18 of Pemantle \cite{pemantlesurvey}.

If $(x,y)$ is a stationary point of $L$ such that $q=\frac{x}{x+y}$ is a stable zero of $P^A$ in the sense that $P^A(q)=0$ and $(P^A)'(q)<0$, then $(x,y)$ is a local minimum of $L$.  Hence it is a limit of the process with positive probability, by Theorem 2.16 of Pemantle \cite{pemantlesurvey}.

Non-convergence to unstable points with $\frac{x}{x+y}\in(0,1)$ follows from the general non-convergence result Theorem 9.1 in Bena\"{\i}m \cite{benaim}.  Condition (iii) of that result, which requires a constant $b$ such that $\E(((\xi_{n+1},\eta_{n+1})\cdot v)^+|\salj_n)>b$ within a neighbourhood of the unstable point for any unit vector $v$, follows from our assumption that $0<p_k<1$ for $0<k<m$.

To show that $q_n$ converges to $1$ with probability zero if $P^A(1)=0$ and $(P^A)'(1)>0$, we use a similar argument to the previous coupling to a P\'{o}lya urn in Section \ref{multiplicative}.  For some sufficiently small $\epsilon$, we compare the process when $q_n\in (1-\epsilon,1]$ to an urn process which, given $\bar{X}_n$ red balls and $\bar{Y}_n$ blue balls, adds $2m+\alpha_1$ red balls with probability $\frac{\beta \bar{X}_n}{\beta \bar{X}_n+\bar{Y}_n}$ and adds $2m+\alpha_2$ blue balls with probability $\frac{\bar{Y}_n}{\beta \bar{X}_n+\bar{Y}_n}$.

The previous argument then works as long as, for some $\beta$, we can show that $$\E(q_{n+1}|\salj_n)\leq \E\left(\frac{\bar{X}_{n+1}}{\bar{X}_{n+1}+\bar{Y}_{n+1}}\large{|}\frac{\bar{X}_{n}}{\bar{X}_{n}+\bar{Y}_{n}}=r\right)$$ when $q_n=r$ with $r\in (1-\epsilon,1)$ , and that $\frac{\bar{X}_{n}}{\bar{X}_{n}+\bar{Y}_{n}}\to 1$ with probability zero.  To do this, we let $\beta=\frac{2m+\alpha_2}{2m+\alpha_1}$, and count the number of times we have added red balls and blue balls respectively as $\hat{X}_n=\bar{X}_n/(2m+\alpha_1)$ and $\hat{Y}_n=\bar{Y}_n/(2m+\alpha_2)$.  Then, as $$\frac{\beta \bar{X}_n}{\beta \bar{X}_n+\bar{Y}_n}=\frac{\hat{X}_n}{\hat{X}_n+\hat{Y}_n},$$ the process $(\hat{X}_n,\hat{Y}_n)_{n\in \N}$ follows a standard P\'{o}lya urn.  This ensures that $\pr\left(\frac{\bar{X}_{n}}{\bar{X}_{n}+\bar{Y}_{n}}\to 1\right)=0$ and that $$\E\left(\frac{\bar{X}_{n+1}}{\bar{X}_{n+1}+\bar{Y}_{n+1}}\large{|}\frac{\bar{X}_{n}}{\bar{X}_{n}+\bar{Y}_{n}}=r\right)=r,$$ which gives us what we need as $F^A(r)<0$ by assumption.

\qed

As with the multiplicative fitness model, in the linear model we get almost sure dominance by the fitter type.
\begin{cor}\label{add_linear}In the linear model with $\alpha_2>\alpha_1$ the proportion of blue vertices converges to $1$ almost surely.
\end{cor}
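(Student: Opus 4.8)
The plan is to mimic the proof of Corollary \ref{mult_linear}, specialising $P^A$ to the linear model and then invoking Theorem \ref{addthm}. First I would observe that in the linear model $p_k=k/m$, so that $P\equiv 0$ and hence the definition of $P^A$ collapses to
$$P^A(z)=(\alpha_1-\alpha_2)z(1-z).$$
Since we assume $\alpha_2>\alpha_1$, the leading coefficient $\alpha_1-\alpha_2$ is strictly negative, so $P^A(z)<0$ for every $z\in(0,1)$ and the only zeros of $P^A$ in $[0,1]$ are $0$ and $1$.

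Next I would classify these two zeros using the stability criterion of Theorem \ref{addthm}. Differentiating gives $(P^A)'(z)=(\alpha_1-\alpha_2)(1-2z)$, so $(P^A)'(0)=\alpha_1-\alpha_2<0$ and $(P^A)'(1)=\alpha_2-\alpha_1>0$; thus $0$ is the unique stable zero and $1$ is unstable. The hypothesis $0<p_k<1$ for $0<k<m$ of Theorem \ref{addthm} holds automatically in the linear model, so that theorem guarantees that $q_n=\frac{x_n}{x_n+y_n}$ converges almost surely to a zero of $P^A$, hence to $0$ or $1$. To pin the limit down to $0$ I would invoke the part of the proof of Theorem \ref{addthm} that treats the unstable endpoint: since $P^A(1)=0$ and $(P^A)'(1)>0$, the P\'olya urn coupling used there gives $\pr(q_n\to 1)=0$, and therefore $q_n\to 0$ almost surely.

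Finally I would translate this into a statement about vertex proportions. By \eqref{easybound} the sum $x_n+y_n$ stays in $[2m+\alpha_1,2m+\alpha_2]$, which is bounded away from $0$ because $\alpha_1>-m$; hence $q_n\to 0$ forces $x_n=\frac{X_n+\alpha_1 A_n}{n}\to 0$. Using that every red vertex has degree at least $m$, so that $X_n\geq m A_n$, together with $m+\alpha_1>0$, I obtain $(m+\alpha_1)\frac{A_n}{n}\leq x_n\to 0$, whence $\frac{A_n}{n}\to 0$ and the proportion of blue vertices $\frac{B_n}{n}\to 1$ almost surely.

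The computations here are all routine; the only point requiring care is that the bare statement of Theorem \ref{addthm} asserts only \emph{positive} probability for a stable zero, so to reach an almost sure conclusion one must separately rule out the unstable endpoint $q=1$. This is precisely the endpoint coupling argument carried out inside the proof of Theorem \ref{addthm}, and identifying it as the one non-automatic ingredient is the main thing to get right.
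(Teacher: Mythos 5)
Your proof is correct and takes essentially the same route as the paper: specialise $P^A$ to $(\alpha_1-\alpha_2)z(1-z)$ in the linear model, identify $0$ as the unique stable zero and $1$ as unstable, and conclude via Theorem \ref{addthm}. You are in fact more careful than the paper's two-line proof, which simply says the result ``follows from Theorem \ref{addthm}'': your observations that the almost-sure conclusion also requires the endpoint P\'{o}lya-urn non-convergence argument from inside the proof of that theorem (since the theorem statement itself only asserts positive probability for stable zeros), and that $q_n\to 0$ must still be translated into convergence of the vertex proportion $\frac{A_n}{A_n+B_n}$, fill in details the paper leaves implicit.
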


\begin{proof}Here $P^A(z)=(\alpha_1-\alpha_2)z(1-z)$, which is negative on $(0,1)$ if $\alpha_2>\alpha_1$, and has a stable zero at $0$ and an unstable one at $1$.  Hence the result follows from Theorem \ref{addthm}.\end{proof}

\subsection{Discussion and examples}

As in Section \ref{multiplicative}, in these examples we concentrate on cases where $p_0=0$ and $p_m=1$; if $p_0>0$ then dominance by blue vertices is impossible, and similarly if $p_m<1$ dominance by red vertices is impossible.

First, we consider the general case where $m=2$.  There are always two stationary points at $(4+\alpha_1,0)$ and $(0,4+\alpha_2)$, corresponding to dominance of the two types.  There is also a stationary point with $\frac{x}{x+y}=\frac{2\alpha_1p_1-\alpha_2+4p_1-2}{(\alpha_2-\alpha_1)(1-2p_1)}$ where this is within $(0,1)$, but this is never stable.  The stationary point $(0,4+\alpha_2)$ is stable if $\alpha_1-\alpha_2+(p_1-\half)(2\alpha_1+4)<0$, that is if $$p_1<\half+\frac{\alpha_2-\alpha_1}{2\alpha_1+4}.$$  If $\alpha_2>2(\alpha_1+1)$, this always holds for all values of $p_1$.  The stationary point $(4+\alpha_1,0)$ is stable if $\alpha_2-\alpha_1-(p_1-\half)(2\alpha_2+4)<0$, that is if $$p_1>\half+\frac{\alpha_2-\alpha_1}{2\alpha_2+4}.$$  Hence there is a range where $\half+\frac{\alpha_2-\alpha_1}{2\alpha_2+4}<p_1<\half+\frac{\alpha_2-\alpha_1}{2\alpha_1+4}$ where both stationary points are stable, and hence limits with positive probability; outside this range there is only one stable fixed point.  Co-existence is not stable for $m=2$ if $p_0=0$ and $p_2=1$, unless $\alpha_1=\alpha_2$.

When $m=3$, we give a couple of examples of how the values of $\alpha_1$ and $\alpha_2$ affect the behaviour for some specific values of the $p_k$.  As elsewhere, we assume $\alpha_2>\alpha_1$.
\begin{enumerate}
\item $m=3$, $p_0=0,p_1=p_2=\half,p_3=1$.  Here there are zeros of $P^A$ at $0$ and $1$.  Both are unstable if $\alpha_2<\frac32 (\alpha_1+1)$, and in this case there is a single stable zero in $(0,1)$, indicating almost sure co-existence.  If $\alpha_2>\frac32 (\alpha_1+1)$ then the zero at $0$ is stable, and this is the only stable zero in $[0,1]$, indicating almost sure blue dominance.  So as in the multiplicative fitness case we see a condition on the relationship between the fitnesses determining whether we get co-existence or blue dominance.
\item $m=3$, $p_0=p_1=0,p_2=9/10,p_3=1$.  Again, there are zeros of $P^A$ at $0$ and $1$.  Both are stable if $\alpha_1>\frac{3\alpha_2-21}{10}$, while if $\alpha_1<\frac{3\alpha_2-21}{10}$ the zero at $0$ is the only stable zero in $[0,1]$.  So in the latter case blue dominance happens almost surely, whereas in the latter case (for example if $\alpha_1=0$ and $\alpha_2=1$) we see that both types have a positive probability of dominance although the blue type is fitter and is also slightly favoured by the type assignment mechanism.

\end{enumerate}

\section{Further discussion}\label{discuss}

\subsection{More than two types}

A natural extension to the models considered in this paper is to consider them with more than two types of vertex.  Antunovi\'{c}, Mossel and R\'{a}cz discuss this extension for their model in Section 3 of \cite{AMR}, where they show that in the linear model their results can be easily extended to any number of types but that in non-linear models the stochastic approximation involves a multi-dimensional function about which it is hard to state general results.  This is reinforced by Haslegrave and Jordan in \cite{threetypes}, where an example of the original model of \cite{AMR} with three types is discussed and shown to have cycling behaviour in the limit, with no convergence to a fixed point.

In our setting with the types having different fitnesses, the linear model can be handled in a similar way to that in \cite{AMR}.  If all types but the fittest have the same fitness, then they can be combined into a single type, and Corollary \ref{mult_linear} (for multiplicative fitness) or Corollary \ref{add_linear} (for additive fitness) can then be applied to show that we get almost sure dominance by the fittest type.  In the more general case where the fitnesses may all be different, we can similarly couple the linear model to a two-type model where all types except the fittest are given the fitness of the second largest type; this coupling can only decrease the proportion of the total degree associated to the fittest type, so we can apply the same results.

For non-linear models, having more than two types would involve the generalisation of the functions $P^M$ and $P^A$ to multi-dimensional ones, which can have a variety of forms according to the choice of the parameters.  As a result, the situation is similar to the one in \cite{AMR} in that it is hard to state general results, and we might expect examples along the lines of that in \cite{threetypes}, where the associated differential equations do not converge to a fixed point.

\subsection{P\'{o}lya urns with fitness and multiple draws}

There is a natural connection between preferential attachment schemes and urn processes.  In the case of our model and the model of \cite{AMR}, the connection is to generalisations of P\'{o}lya urn schemes with multiple draws, about which there are a number of recent papers.  Kuba and Mahmoud \cite{kuba} consider a two-colour urn where, at each step, $m$ balls are drawn from the urn and the numbers of balls of each colour added to the urn depend (in a deterministic way) on the numbers of balls of each colour drawn.  They concentrate on a special case which produces a martingale, reminiscent of the linear model in \cite{AMR}, and prove a central limit theorem in this setting.  Lasmar, Mailler and Selmi \cite{lasmar} use stochastic approximation to extend the results to more general cases; as in \cite{AMR} and this paper the limiting behaviour is associated to the limiting behaviour of a certain differential equation.  Gao and Mahmoud \cite{gao2018} extend the model to allow for a random replacement matrix, and also use stochastic approximation to prove convergence results.

The clearest link to these papers is with the multiplicative model when $\alpha=0$.  In this case the total degrees of the two colours, $(X_n,Y_n)$, can be seen as following an urn scheme of this type where the two colours have different fitnesses or, in urn terminology, activities.  This means that the blue balls have weight $\phi$ and the red balls weight $1$, and balls are drawn with probability proportional to their weights.  Drawing $m$ balls then corresponds to drawing $m$ random edge ends with the same weights, which corresponds to our multiplicative fitness model.  The urn model corresponding to our model then has a random replacement matrix with a particular distribution, where all the balls added are of the same colour, corresponding to the colour of the new vertex in the graph model.  Our results in Theorem \ref{multthm} can thus be seen as an extension of the results of \cite{gao2018,kuba,lasmar} to the setting with fitnesses and this specific choice of replacement matrix.

\subsection{Distribution of the limit}\label{limitdist}

In cases where the function $P^M$ or $P^A$ has more than one stable zero, the results in Theorems \ref{multthm} and \ref{addthm} tell us that each stable zero is a limit with positive probability but do not tell us the actual probabilities that the different zeros are limits.  Finding the actual distribution of the limit in cases like this is a hard problem in general, and we expect that the distribution will usually depend on the initial graph.

We investigate this problem for Example \ref{biasblue} from Section \ref{multdiscuss} by simulation.  Here $m=3$, $p_0=p_1=0$, $p_2=9/10$ and $p_3=1$, giving a small bias towards blue in the type assignment mechanism, and we saw that for $\phi<20/13$ both convergence to 0 (blue domination) and convergence to 1 (red domination) have positive probability.  For each of a range of values of $\phi$ in $[1,20/13)$, we ran 10000 simulations with 100000 vertices and counted the number which appeared to be converging to 1; simulations were estimated to be converging to $1$ if $x_{100000}>1/2$ and, to exclude cases of very slow convergence to 0, $x_{100000}>x_{90000}$.  The results (shown as numbers of simulations out of 10000) are in Table \ref{simul}.  It can be seen that convergence to $1$ is extremely rare for values of $\phi$ closer to 20/13, suggesting a continuous phase transition, but is reasonably common for values of $\phi$ a little larger than $1$.

\begin{table}[h]\begin{centre}\begin{tabular}{|c|ccccccccccc|}\hline $\phi$ & $1.00$ & $1.05$& $1.10$& $1.15$& $1.20$& $1.25$& $1.30$& $1.35$& $1.40$& $1.45$& $1.50$ \\ \hline Red domination & 3465 & 2425 & 1577 & 965 & 521 & 236 &  85 &  33  &  4  &  0  &  0\\ \hline \end{tabular}\caption{Numbers of simulations, out of 10000, appearing to show red domination for Section \ref{multdiscuss} Example \ref{biasblue}}\label{simul}\end{centre}\end{table}

\section*{Acknowledgement}

The author thanks two anonymous referees for useful comments and suggestions.

\bibliographystyle{plain}
\bibliography{scalefree}

\end{document}